\newtheorem{theorem}{Theorem}[section]
\newtheorem{proposition}[theorem]{Proposition}
\newtheorem{definition}[theorem]{Definition}
\newtheorem{corollary}[theorem]{Corollary}
\newtheorem{remark}[theorem]{Remark}
\newtheorem{lemma}[theorem]{Lemma}
\newcommand{\K}{ {\Bbbk} }
 \newcommand{\KP}{{\Bbbk[X]/(P_1)}}
\newcommand{\KPP}{{\Bbbk[X]/(P_2)}}
 \newcommand{\KPn}{{\Bbbk[X]/(P_1^n)}}
 \newcommand{\KPPn}{{\Bbbk[X]/(P_2^n)}}
\begin{document}

\title{on some local rings}
\keywords{Algebras, local rings, polynomials}

\author{Mohamad \textsc{Maassarani} }
\maketitle
\begin{abstract}
Given two seprable irreducible polynomials $P_1$ and $P_2$ over a filed $\K$. We show that the rings $\KPn$ and $\KPPn$ are isomorphic if and only if their residue fields $\KP$ and $\KPP$ are isomorphic. Partial results in this direction are obtained for the case where the polynomials are not seprable. We note that given a seprable irreducible polynomial $P$ we prove that we have an isomorphism between $\K[X]/(P^n)$ and $(\K[X](P))[Y]/(Y^n)$.
\end{abstract}
\section*{Introduction and main results}
Given two irreducible polynomials $P_1$ and $P_2$ over a filed $\K$. The rings $\KPn$ and $\KPPn$ are local rings, hence if they are isomorphic then their residue fields $\KP$ and $\KPP$ are also isomorphic.\\\\
One can wonder if the converse assertion is true. In that case,  we get that $\KPn$ and $\KPPn$ are isomorphic for all $n\geq1$.\\\\
We show that the converse is true if $P_1$ and $P_2$ are seprable (irreducible) polynomials. In particular, the converse holds for any two polynomails over a perfect field as fields of characteristic $0$ or algebraic extensions of finite fileds. This is show in section \ref{S1}. More precisely, we show that for a seprable irreducible polynomial $P$, we have a $\K$-algebra isomorphism between $\K[X]/(P^n)$ and $(\K[X]/(P))[Y]/(Y^n)$ (theorem \ref{thdec}), and deduce the converse from it (theorem \ref{thres}).\\\\
In section \ref{S2},  We develop criterias (for $P_1$ and $P_2$ irreducible) under wich the existence of certain isomorphisms $f: \KP \to \KPP$ or $f_m: \K[X]/ (P_1^m)\to \K[X]/ (P_2^m)$ imply that $\KPn$ and $\KPPn$ are isomorphic for all $n\geq 1$ (corollary \ref{cr} and theorem \ref{thp1}). These criteria apply to the case where $P_1$ and $P_2$ are not seprable and they are obtained by constructing lifts of $f:\KP \to \KPP$ to morphisms $f_{X,n}: \KPn\to \KPPn$.

\section{Seprable case}\label{S1}
Let $\K$ be a field and $P$ be an irreducible polynomial over $\K$. In this section, we show that if $P$ is seprable $(i.e. \ P'\neq 0)$ then $\K[X]/(P^n)$ is isomorphic as a $\K$-algebra to $(\K[X]/(P))[Y]/(Y^n)$ (theorem \ref{thdec}). From this we deduce that if $P_1$ and $P_2$ are two seprable polynomials, then the local rings $\KPn$ and $\KPPn$ are isomorphic if and only if their residue fields $\KP$ and $\KPP$ are isomophic (theorem \ref{thres}). 

\begin{lemma}
For $Q\in \K[X]$, we have : 
$$ P(X+Q(X))=P(X)+P'(X)Q(X)+R(X)Q(X)^2,$$
for some $R\in \K[X]$.
\end{lemma}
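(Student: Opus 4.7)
The statement is a first-order formal Taylor expansion, so the plan is to prove the bivariate identity
$$P(X+Y) = P(X) + P'(X)Y + Y^2 S(X,Y)$$
for some $S \in \K[X,Y]$, and then specialize $Y \mapsto Q(X)$ to obtain the claim with $R(X) := S(X,Q(X)) \in \K[X]$.

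To establish the bivariate identity I would first verify it for monomials $P(X) = X^n$ using the binomial theorem: the expansion
$$(X+Y)^n = X^n + nX^{n-1}Y + \sum_{j=2}^{n} \binom{n}{j} X^{n-j} Y^{j}$$
isolates the zeroth- and first-order terms in $Y$ as $X^n + nX^{n-1}Y$, and factors $Y^2$ out of the remainder, giving $S_n(X,Y) = \sum_{j=2}^n \binom{n}{j} X^{n-j} Y^{j-2} \in \K[X,Y]$. Then I extend to arbitrary $P = \sum a_i X^i$ by $\K$-linearity: both sides and the operator $P \mapsto P'$ are $\K$-linear in $P$, so taking $S(X,Y) = \sum a_i S_i(X,Y)$ works. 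Finally, the $\K$-algebra evaluation map $\K[X,Y] \to \K[X]$ sending $Y \mapsto Q(X)$ produces the desired identity in $\K[X]$.

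I do not expect a serious obstacle here. The only point worth flagging is that the argument uses no division by integers — the expansion is finite and its coefficients are binomial coefficients, which sit in $\Z$ and hence in $\K$ — so the identity holds over any field regardless of characteristic. This is what allows the lemma (and the subsequent separable-case arguments) to proceed in positive characteristic, where a naïve Taylor series with $1/k!$ coefficients would fail.
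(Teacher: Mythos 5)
Your proposal is correct and follows essentially the same route as the paper: expand each monomial $X^i$ by the binomial theorem, isolate the constant and linear terms (which sum to $P$ and $P'Q$ respectively), and absorb the rest into a $Q^2$ multiple by $\K$-linearity. Working first with a formal variable $Y$ and then specializing $Y\mapsto Q(X)$ is only a cosmetic repackaging of the paper's direct substitution, though your remark that no division by integers occurs is a worthwhile observation for the positive-characteristic case.
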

\begin{proof}
Set $P(X)=\sum a_i X^i$. We get $$P(X+Q(X))=\sum a_i (X+Q(X))^i=\sum a_i (X^i+iX^{i-1}Q(X)+R_i(X)Q(X)^2),$$ for some $R_i \in \K[X]$. But $\underset{i>0}{\sum} ia_ix^{i-1}=P'(X)$ and $\sum a_i X^i=P(X)$. This proves the proposition.
\end{proof}

\begin{lemma}
If $P'\neq 0$, then we have an infinite sequence of pairs of polynomials $(Q_0,R_0),(Q_1,R_1),\cdots$ such that for $k\geq 0$ we have : $$ P(X+\overset{k}{\underset{i=1}{\sum}} Q_i(X)P(X)^i)=R_k(X)P(X)^{k+1}.$$
\end{lemma}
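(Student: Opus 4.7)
The plan is to proceed by induction on $k$, using the previous lemma as the engine at each step. The base case $k=0$ is the identity $P(X)=1\cdot P(X)^{1}$, so one sets $R_0=1$ (and $Q_0$ plays no role, since the sum is empty).

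For the inductive step, assume we have built $Q_1,\dots,Q_k$ and $R_k$ with
$$ P\!\left(X+\sum_{i=1}^{k}Q_i(X)P(X)^i\right)=R_k(X)P(X)^{k+1}.$$
Write $S_k(X)=\sum_{i=1}^{k}Q_i P^i$, and look for a candidate $Q_{k+1}$ so that adding the perturbation $Q_{k+1}P^{k+1}$ inside $P$ produces a multiple of $P^{k+2}$. Applying the previous lemma (or equivalently the finite Taylor expansion of $P$ around $X+S_k$) with perturbation $Q_{k+1}P^{k+1}$, one obtains
$$ P(X+S_k+Q_{k+1}P^{k+1})=P(X+S_k)+P'(X+S_k)\,Q_{k+1}P^{k+1}+T\cdot Q_{k+1}^{2}P^{2k+2}$$
for some polynomial $T$. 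Using the inductive hypothesis to replace $P(X+S_k)$ by $R_k P^{k+1}$, and factoring out $P^{k+1}$, the bracket to control modulo $P$ is
$$ R_k(X)+P'(X+S_k)\,Q_{k+1}(X).$$
Since $P\mid S_k$, we have $P'(X+S_k)\equiv P'(X)\pmod{P}$, so the problem reduces to choosing $Q_{k+1}$ with $P'(X)Q_{k+1}\equiv -R_k\pmod{P}$.

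The key step, and the only place separability is used, is the solvability of this congruence. Because $P$ is irreducible and $P'\neq 0$ with $\deg P'<\deg P$, $P$ cannot divide $P'$, so $\gcd(P,P')=1$ and $P'$ is invertible in $\K[X]/(P)$. Lifting the inverse to $\K[X]$ produces the desired $Q_{k+1}$, after which $R_{k+1}$ is simply read off as the quotient
$$ R_{k+1}=\frac{R_k+P'(X+S_k)Q_{k+1}+T\,Q_{k+1}^{2}P^{k+1}}{P},$$
which is a polynomial by construction. The main obstacle is conceptual rather than computational: recognizing that the proof is really a Hensel/Newton-style lifting, where separability ($P'\neq 0$) plays exactly the role of the non-vanishing derivative condition needed to iterate. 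Once this is identified, the induction runs cleanly.
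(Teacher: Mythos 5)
Your proof is correct and follows essentially the same route as the paper: induction on $k$, expanding $P$ around $X+\sum_{i\le k}Q_iP^i$ via the previous lemma, and solving the linear congruence $P'Q_{k+1}\equiv -R_k \pmod{P}$ using the invertibility of $P'$ in the field $\K[X]/(P)$ (the paper likewise observes that $P'(X+S_k)\equiv P'(X)\bmod P$). Your explicit formula for $R_{k+1}$ and the Hensel/Newton framing are fine additions, but the argument is the same.
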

\begin{proof}
For $k=0$, the equation is $P(X)=R_0(X)P(X)$. We can take any $Q_0$ and we take $R_0(X)=1$. We will prove the propostion by induction. Assume that the pairs $(Q_0,R_0),\cdots,(Q_n,R_n)$ are constructed, we will constuct $(Q_{n+1},R_{n+1})$. Applying the previous lemma we get that for $U$ and $S$ in $\K[X]$, we have: 
$$  P(U(X)+S(X)P(X)^{n+1})=P(U(X))+P'(U(X))S(X)P(X)^{n+1}+T(X)P(X)^{2n+2}.$$
For $U(X)=X+\overset{n}{\underset{i=1}{\sum}} Q_i(X)P(X)^i$ the eqution is reduced by the induction hypothesis to : 
$$  P(U(X)+S(X)P(X)^{n+1})=(R_n(X)+P'(U(X))S(X))P(X)^{n+1}+T(X)P(X)^{2n+2},$$
and we get a pair $(Q_{n+1},S_{n+1})$ if we find $S(X)$ such that $R_n(X)+P'(U(X))S(X)$ is zero modulo $P(X)$ (i.e. a multiple of $P(X)$). But $P$ is irreducible. Hence $\K[X]/(P)$ is a field and the class of $P'\circ U$ in this field is equal to the class of $P'$ wich is invertible since $P'\neq 0$. So $P'\circ U$ is invertible modulo $P$ and therefore we can find an $S(X)$ such that  $R_n(X)+P'(U(X))S(X)$ is zero modulo $P$. This proves the proposition.
\end{proof}

\begin{proposition}
 If $ P'\neq  0$, then we have an injective $\K$-algebra morphism from the field $\K[X]/(P)$ into $\K[X]/(P^k)$, for $k \geq 1$.
\end{proposition}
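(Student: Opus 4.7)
The plan is to exhibit an explicit root of $P$ inside $\K[X]/(P^k)$ and then invoke the universal property of the quotient $\K[X]/(P)$.

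First I would apply the preceding lemma with index $k-1$ in place of $k$: this produces polynomials $Q_1, \dots, Q_{k-1}$ and $R_{k-1}$ in $\K[X]$ such that, setting
$$\alpha := X + \sum_{i=1}^{k-1} Q_i(X)\,P(X)^i \in \K[X],$$
we have $P(\alpha) = R_{k-1}(X)\,P(X)^{k}$. Reducing modulo $P^k$, the class $\bar\alpha \in \K[X]/(P^k)$ therefore satisfies $P(\bar\alpha) = 0$. In other words, $\bar\alpha$ is a root of $P$ in the ring $\K[X]/(P^k)$, obtained by correcting $X$ by successive multiples of higher powers of $P$; this is where the hypothesis $P' \neq 0$ is used (through the previous lemma, whose inductive step required $P' \circ U$ to be invertible modulo $P$).

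Next I would use the universal property of $\K[X]/(P)$: since $P(\bar\alpha) = 0$, there is a unique $\K$-algebra morphism
$$\varphi \colon \K[X]/(P) \longrightarrow \K[X]/(P^k)$$
sending the class of $X$ to $\bar\alpha$. Concretely, $\varphi$ is induced by the evaluation map $\K[X] \to \K[X]/(P^k)$, $F \mapsto F(\alpha) \bmod P^k$, which annihilates $P$ by the above computation, hence factors through $\K[X]/(P)$.

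Finally, injectivity is immediate: $\K[X]/(P)$ is a field because $P$ is irreducible, and $\varphi(1) = 1 \neq 0$ in $\K[X]/(P^k)$, so $\ker \varphi$ is a proper ideal of a field, hence zero. I do not expect any real obstacle here; the entire content of the proposition is packaged into the previous lemma, which upgrades the trivial root of $P$ modulo $P$ to a root modulo $P^k$, and everything else is formal.
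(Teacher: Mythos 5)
Your proof is correct and follows essentially the same route as the paper: both apply the preceding lemma to produce the root $X+\sum_{i=1}^{k-1}Q_i(X)P(X)^i$ of $P$ modulo $P^k$ and then invoke the universal property of $\K[X]/(P)$. The only (cosmetic) difference is in the injectivity step, where the paper composes with the projection $\K[X]/(P^k)\to\K[X]/(P)$ to recover the identity on $X$, while you use the equally valid and slightly slicker observation that a nonzero ring morphism out of the field $\K[X]/(P)$ must be injective.
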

\begin{proof}
Let $Q_0,Q_1,\cdots,Q_{k-1}$ be as in the previous proposition and let $\phi_k : \K[X] \to \K[X]/(P^k)$ be the $\K$-algebra morphism given by $$ X\mapsto X+\overset{k-1}{\underset{i=1}{\sum}} Q_i(X)P(X)^i.$$
By the previous proposition $\phi_k$ maps $P$ to $0$ and hence induces an algebra morphism $\bar{\phi}_k : \K[X]/(P) \to \K[X]/(P^k)$. To see that $\bar{\phi}_k$ is injective, notice that $\pi_k\circ \bar{\phi}_k(X)=X$, where $\pi_k$ is the projection $\K[X]/(P^k) \mapsto \K[X]/(P), \ X\mapsto X$.
\end{proof}

\begin{corollary}
If $P' \neq 0$, then the local ring $\K [X] / (P^k)$ contains its residue field $\K [X]/(P)$ as a $\K$-subalgebra.
\end{corollary}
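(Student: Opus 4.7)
The plan is to deduce this corollary directly from the preceding proposition together with the basic description of the local structure of $\K[X]/(P^k)$. The previous proposition supplies an injective $\K$-algebra morphism $\bar{\phi}_k : \K[X]/(P) \hookrightarrow \K[X]/(P^k)$. Since an injective ring homomorphism identifies its source with its image, I would simply identify $\K[X]/(P)$ with the $\K$-subalgebra $\bar{\phi}_k(\K[X]/(P)) \subset \K[X]/(P^k)$.

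The only conceptual point to address is the claim that $\K[X]/(P)$ really is the residue field of $\K[X]/(P^k)$, which does not appear as an explicit earlier result. I would briefly justify this: the ring $\K[X]/(P^k)$ is local because $P$ is irreducible, so the only prime ideal of $\K[X]$ containing $P^k$ is $(P)$; hence its unique maximal ideal is $(P)/(P^k)$, with quotient isomorphic to $\K[X]/(P)$. This makes the terminology ``residue field'' in the statement unambiguous.

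The main step is therefore purely formal: combine the injection $\bar{\phi}_k$ with the identification above to conclude that $\K[X]/(P^k)$ contains a $\K$-subalgebra isomorphic to its own residue field. I do not anticipate any obstacle; the work has already been done in constructing the polynomials $Q_i$ and the morphism $\bar{\phi}_k$. One could optionally observe for clarity that $\bar{\phi}_k$ is a section of the projection $\pi_k$, so the inclusion of the residue field into $\K[X]/(P^k)$ is a splitting of the canonical quotient map, but this extra remark is not needed to prove the stated corollary.
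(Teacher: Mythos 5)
Your proposal is correct and follows exactly the route the paper intends: the corollary is stated without proof as an immediate consequence of the preceding proposition, namely identifying $\K[X]/(P)$ with the image of the injective morphism $\bar{\phi}_k$. Your added remark that $(P)/(P^k)$ is the unique maximal ideal, so that $\K[X]/(P)$ really is the residue field, is a reasonable clarification but does not change the argument.
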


\begin{corollary}
If $P' \neq 0$, then the local ring $\K [X] / (P^k)$ is a $\K [X]/(P)$-algebra.
\end{corollary}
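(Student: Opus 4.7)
The plan is to deduce this corollary directly from the preceding one, which already exhibits $\K[X]/(P)$ as a $\K$-subalgebra of $\K[X]/(P^k)$ via the injective $\K$-algebra morphism $\bar{\phi}_k$ constructed in the proposition. The content of the statement to be proved is then purely formal: whenever a commutative ring $R$ is equipped with a ring homomorphism from a commutative ring $S$ into it, $R$ inherits a canonical $S$-algebra structure, with $s \cdot r$ defined as $\iota(s) r$ for the given map $\iota : S \to R$.

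Concretely, I would first recall the injection $\bar{\phi}_k : \K[X]/(P) \hookrightarrow \K[X]/(P^k)$ from the previous proposition, noting that it is a $\K$-algebra map and in particular a ring homomorphism sending $1$ to $1$. Then I would define the external action of $\K[X]/(P)$ on $\K[X]/(P^k)$ by $a \cdot b := \bar{\phi}_k(a) \, b$, and verify (essentially tautologically) the module axioms: bilinearity and associativity follow from the fact that $\bar{\phi}_k$ is a ring homomorphism, and compatibility with the multiplication on $\K[X]/(P^k)$ follows from commutativity. This makes $\K[X]/(P^k)$ into a $\K[X]/(P)$-algebra in the usual sense.

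There is no real obstacle here, as the only nontrivial ingredient is the existence of the ring homomorphism $\bar{\phi}_k$, which was established in the proposition above. The corollary may be regarded as a restatement of the preceding corollary in algebra-theoretic language, emphasizing the module/algebra structure rather than the subring inclusion; it will presumably be used later to transport constructions from $\K[X]/(P)$ to $\K[X]/(P^k)$ via scalar extension.
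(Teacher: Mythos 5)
Your proof is correct and matches the paper's intent exactly: the paper states this corollary without proof, treating it as the immediate formal consequence of the injective ring morphism $\bar{\phi}_k : \K[X]/(P) \to \K[X]/(P^k)$ from the preceding proposition, which is precisely the structure map you use to define the $\K[X]/(P)$-algebra structure. Nothing is missing.
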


\begin{lemma}
If $P' \neq 0$, then the family $1, P,P^2,\cdots,P^{k-1}$ of $K[X]/(P^k)$ is free over $\K[X]/(P)$. 
\end{lemma}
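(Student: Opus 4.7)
The plan is to lift the dependence relation from $\K[X]/(P^k)$ up to $\K[X]$ and then exploit that $P$, being irreducible, is a prime element of the unique factorization domain $\K[X]$, so divisibility by powers of $P$ behaves like a discrete valuation.

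Suppose we have a relation $\sum_{i=0}^{k-1} \bar{\phi}_k(\alpha_i)\, P^i = 0$ in $\K[X]/(P^k)$ with $\alpha_i \in \K[X]/(P)$, and assume for contradiction that not all $\alpha_i$ vanish. Let $\ell = \min\{i : \alpha_i \neq 0\}$. Choose polynomial lifts $A_i \in \K[X]$ of $\alpha_i$, taking $A_i = 0$ whenever $\alpha_i = 0$, and set $\Phi(X) = X + \sum_{s=1}^{k-1} Q_s(X)\,P(X)^s \in \K[X]$, which is a lift of $\phi_k(X) \in \K[X]/(P^k)$. By construction of $\bar{\phi}_k$ in the previous proposition, $\bar{\phi}_k(\alpha_i) = \phi_k(A_i)$ is represented in $\K[X]$ by $A_i(\Phi(X))$. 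The relation therefore lifts to
\[
P^k \;\Big|\; \sum_{i=\ell}^{k-1} A_i(\Phi(X))\,P^i \quad \text{in } \K[X].
\]

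The key observation is that $\Phi(X) \equiv X \pmod{P}$, so $A_i(\Phi(X)) \equiv A_i(X) \pmod{P}$ for every $i$. In particular, $\alpha_\ell \neq 0$ forces $P \nmid A_\ell$, hence $P \nmid A_\ell(\Phi(X))$, so the term $A_\ell(\Phi(X))\,P^\ell$ is divisible by exactly $P^\ell$, while each term with $i > \ell$ is obviously divisible by $P^{\ell+1}$. Since $P$ is prime in $\K[X]$, the full sum is then divisible by exactly $P^\ell$, contradicting divisibility by $P^k$ because $\ell \leq k-1 < k$. Hence all $\alpha_i$ must vanish.

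There is no serious obstacle. The one point worth checking is that $A_i(\Phi(X))$ really does represent $\bar{\phi}_k(\alpha_i)$, which is immediate from the way $\phi_k$ was defined as the $\K$-algebra morphism $X\mapsto \Phi(X) \bmod P^k$; the rest of the argument is a clean $P$-adic valuation calculation at the prime $P$ of $\K[X]$.
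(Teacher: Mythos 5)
Your proof is correct. The underlying mechanism is the same as the paper's --- both arguments come down to the fact that multiplying by a suitable power of $P$ isolates the lowest-index term, i.e.\ a valuation-at-$P$ argument --- but the execution differs. The paper stays inside $\K[X]/(P^k)$ and argues iteratively: multiply the relation by $P^{k-1}$ to get $a_0P^{k-1}=0$, conclude $a_0=0$, then repeat with $P^{k-2}$, and so on. You instead lift the whole relation to $\K[X]$, pick the minimal index $\ell$ with $\alpha_\ell\neq 0$, and read off that the sum has $P$-adic valuation exactly $\ell<k$, a contradiction in one stroke. Your version has the advantage of making explicit the point the paper's proof leaves tacit: the step ``$a_0P^{k-1}=0$ hence $a_0=0$'' requires knowing that a nonzero class $\alpha\in\K[X]/(P)$ has image $\bar{\phi}_k(\alpha)$ represented by a polynomial not divisible by $P$, which is exactly your observation that $\Phi(X)\equiv X \pmod{P}$ forces $A(\Phi(X))\equiv A(X)\pmod{P}$ (equivalently, that $\pi_k\circ\bar{\phi}_k=\mathrm{id}$). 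So the two proofs buy essentially the same thing, but yours is the more self-contained of the two.
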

\begin{proof}
Assume that $a_0 1 + a_1P +\cdots a_{k-1} P^{k-1}=0$ for given $a_i \in \K[X]/(P)$. We want to prove that $a_0=\cdots=a_{k-1}=0$. To see that multiply the equation $P^{k-1}$, we get $a_0P^{k-1}=0.$. Hence $a_0=0$. Since $a_0=0$, multuplying the first equation of the proof by $P^{k-2}$, we get that $a_1P^{k-1}=0$ and hence as before we deduce that $a_1=0$. We show that $a_i=0$ for all the remaining $i$'s by multiplying successively by $P^{k-3},P^{k-4},\dots$.
\end{proof}

\begin{theorem}\label{thdec}
If $P'\neq 0$, then $\K[X]/(P^k)$ is isomorphic as a $\K[X]/(P)$-algebra and as a $\K$-algebra to $$(\K[X]/(P))[Y]/(Y^k).$$ The isomorphism is given by $Y\mapsto P$.
\end{theorem}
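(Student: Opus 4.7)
The plan is to write down the obvious $\K[X]/(P)$-algebra morphism $Y\mapsto P$, descend it to a map out of $(\K[X]/(P))[Y]/(Y^k)$, and then verify that it is both injective and surjective using the two results that precede the theorem.

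First I would exploit the fact that by the previous corollary, $\K[X]/(P^k)$ is already a $\K[X]/(P)$-algebra via the morphism $\bar\phi_k$ constructed earlier. By the universal property of polynomial rings, there is therefore a unique $\K[X]/(P)$-algebra morphism
\[
\Psi : (\K[X]/(P))[Y] \longrightarrow \K[X]/(P^k), \qquad Y \longmapsto \overline{P},
\]
where $\overline{P}$ denotes the class of $P$ in $\K[X]/(P^k)$. Since $\overline{P}^k=0$, this morphism kills $Y^k$, so it factors through a $\K[X]/(P)$-algebra morphism
\[
\bar\Psi : (\K[X]/(P))[Y]/(Y^k) \longrightarrow \K[X]/(P^k).
\]

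Next I would prove injectivity of $\bar\Psi$. Every element of $(\K[X]/(P))[Y]/(Y^k)$ has a unique representative of the form $\sum_{i=0}^{k-1} a_i Y^i$ with $a_i\in \K[X]/(P)$, and its image under $\bar\Psi$ is $\sum_{i=0}^{k-1} a_i \overline{P}^i$. The freeness lemma proved immediately above states exactly that such a sum vanishes in $\K[X]/(P^k)$ only when every $a_i$ is zero, which gives injectivity at once.

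For surjectivity I would argue by dimension. Both sides are finite-dimensional $\K$-vector spaces: $\K[X]/(P^k)$ has dimension $k\deg(P)$, and $(\K[X]/(P))[Y]/(Y^k)$ has dimension $k\cdot\dim_\K \K[X]/(P)=k\deg(P)$. Since $\bar\Psi$ is an injective $\K$-linear map between finite-dimensional $\K$-vector spaces of equal dimension, it is an isomorphism, which also gives the $\K$-algebra statement of the theorem.

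There is no real obstacle here: all the substantive work has been done in the preceding lemmas (existence of the lift $\bar\phi_k$ promoting $\K[X]/(P^k)$ to a $\K[X]/(P)$-algebra, and freeness of the family $1,P,\dots,P^{k-1}$). The only point to keep straight is that the map $Y\mapsto P$ is taken to be $\K[X]/(P)$-linear via the chosen $\bar\phi_k$, and that the dimension count is the cleanest way to upgrade injectivity to bijectivity.
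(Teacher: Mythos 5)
Your proposal is correct and follows essentially the same route as the paper: define the unique $\K[X]/(P)$-algebra morphism sending $Y$ to the class of $P$, deduce injectivity from the freeness of $1,P,\dots,P^{k-1}$ over $\K[X]/(P)$, and conclude by comparing $\K$-dimensions, both equal to $k\deg(P)$. Your write-up is in fact slightly more careful than the paper's, since you make explicit that the morphism out of the polynomial ring kills $Y^k$ before descending to the quotient.
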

\begin{proof}
One has a unique $\K[X]/(P)$-algebra morphism $\psi_k: (\K[X]/(P))[Y]/(Y^k)\to \K[X]/(P^k)$ given by $Y\mapsto P$. This morphism is injective by the previous lemma. The morphism $\psi_k$, is also a $\K$-algebra morphism. The dimension of $\K[X]/(P^k)$ over $\K$ is equal to the degree of $P^k$, hence equal to $k \cdot deg(P)$ ($deg(P)$ is the degree of $P$). The algebra $(\K[X]/(P))[Y]/(Y^k)$ is also of dimension $k\cdot deg(P)$ over $\K$. We therefore have that $\psi_k$ is an injective $\K$-linear map between two vector spaces having the same dimension over $\K$. This proves that $\psi_k$ is an isomorphism. 
\end{proof}
\begin{theorem}\label{thres}
Let $P_1$ and $P_2$ be two irreducible polynomials over $\K$ and $k$ a positive integer. If $P_1$ and $P_2$ are seprable  $(i.e.\ P_i'\neq 0)$, then the local rings $\K[X]/(P_1^k)$ and $\K[X]/(P_2^k)$ are isomorphic if and only if their residue fileds $\K[X]/(P_1)$ and $\K[X]/(P_2)$ are isomorphic.
\end{theorem}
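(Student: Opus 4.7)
The forward direction is immediate from the introductory observation: both rings are local, so any ring isomorphism $\K[X]/(P_1^k)\to \K[X]/(P_2^k)$ must send the unique maximal ideal $(P_1)$ to $(P_2)$ and therefore descends to a ring isomorphism between the residue fields $\K[X]/(P_1)$ and $\K[X]/(P_2)$.

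For the converse direction, my plan is to reduce everything to Theorem \ref{thdec}. Since $P_1$ and $P_2$ are separable, that theorem gives $\K$-algebra isomorphisms
\[
\K[X]/(P_1^k)\;\cong\;(\K[X]/(P_1))[Y]/(Y^k),\qquad \K[X]/(P_2^k)\;\cong\;(\K[X]/(P_2))[Y]/(Y^k).
\]
So it suffices to show that the two right-hand sides are isomorphic as $\K$-algebras under the sole assumption that their coefficient fields are isomorphic.

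To do this, let $\sigma\colon \K[X]/(P_1)\to \K[X]/(P_2)$ be a $\K$-algebra isomorphism. It extends coefficientwise to a $\K$-algebra isomorphism
\[
\tilde{\sigma}\colon (\K[X]/(P_1))[Y]\;\longrightarrow\;(\K[X]/(P_2))[Y],\qquad Y\mapsto Y,
\]
of polynomial rings in one variable. Since $\tilde{\sigma}(Y^k)=Y^k$, it sends the ideal $(Y^k)$ bijectively onto $(Y^k)$, and therefore passes to a $\K$-algebra isomorphism of the quotients $(\K[X]/(P_1))[Y]/(Y^k)\to (\K[X]/(P_2))[Y]/(Y^k)$. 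Composing with the two isomorphisms supplied by Theorem \ref{thdec} produces the desired isomorphism $\K[X]/(P_1^k)\to \K[X]/(P_2^k)$.

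There is essentially no obstacle here once Theorem \ref{thdec} is in hand; the proof is a formal manipulation, and the only thing to check is that extending a ring isomorphism of coefficients to polynomial rings and then passing to the quotient by $(Y^k)$ behaves functorially, which is routine.
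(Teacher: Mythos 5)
Your proof is correct and takes essentially the same route as the paper: both reduce the converse to Theorem \ref{thdec} and then transport an isomorphism of the residue fields coefficientwise through $(\cdot)[Y]/(Y^k)$; you are simply making explicit the step the paper compresses into ``the statement follows.'' One minor adjustment: since this theorem (unlike the $\K$-algebra proposition that follows it in the paper) concerns plain ring isomorphisms, you should take $\sigma$ to be a ring isomorphism rather than a $\K$-algebra isomorphism of the residue fields --- your coefficientwise extension and passage to the quotient by $(Y^k)$ work verbatim in that generality.
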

\begin{proof}
If the local rings are isomorphic then the residue fields are isomorphic. Since we assume that $P_i'\neq 0$, we have by the previous theorem that $\K[X]/(P_i^k)$ is isomorphic to $(\K[X]/(P_i))[Y]/(Y^n)$. The "only if" part of the statement follows. 
\end{proof}
\begin{remark}
The condition $P'\neq 0$ is always satisfied if $\K$ is a perfect field as caracteristic $0$ or algebraic extensions of finite fields. Hence, the last two theorems always hold over those fields.
\end{remark}
\begin{proposition}
Let $P_1$ and $P_2$ be two irreducible polynomials over $\K$ and $k$ a positive integer. If $P_1$ and $P_2$ are seprable  $(i.e.\ P_i'\neq 0)$, then the local rings $\K[X]/(P_1^k)$ and $\K[X]/(P_2^k)$ are isomorphic as $\K$-algebras if and only if their residue fileds $\K[X]/(P_1)$ and $\K[X]/(P_2)$ are isomorphic as $\K$-algebras.
\end{proposition}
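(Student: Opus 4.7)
The plan is to recycle Theorem \ref{thres}, observing that every isomorphism constructed in its proof was already built as a $\K$-algebra isomorphism. So the task reduces to verifying both directions at the level of $\K$-algebras.

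For the forward implication, I would start from a $\K$-algebra isomorphism $\varphi: \K[X]/(P_1^k)\to \K[X]/(P_2^k)$. Since $\varphi$ is a ring isomorphism between local rings, it must send the maximal ideal $(P_1)/(P_1^k)$ onto $(P_2)/(P_2^k)$ (units go to units). Passing to the quotient gives an induced map on the residue fields $\K[X]/(P_1)\to \K[X]/(P_2)$. Because $\varphi$ is $\K$-linear and the quotient maps onto the residue fields are $\K$-algebra maps, the induced map is automatically a $\K$-algebra isomorphism.

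For the converse, I would use Theorem \ref{thdec} directly: it produces a $\K$-algebra isomorphism (not merely a ring isomorphism) $\K[X]/(P_i^k)\cong (\K[X]/(P_i))[Y]/(Y^k)$ for $i=1,2$. Given a $\K$-algebra isomorphism $f:\K[X]/(P_1)\to \K[X]/(P_2)$, I extend it to a $\K$-algebra isomorphism of polynomial rings $(\K[X]/(P_1))[Y]\to (\K[X]/(P_2))[Y]$ sending $Y\mapsto Y$, which descends modulo $Y^k$ to a $\K$-algebra isomorphism $(\K[X]/(P_1))[Y]/(Y^k)\to (\K[X]/(P_2))[Y]/(Y^k)$. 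Composing the three $\K$-algebra isomorphisms gives the desired map.

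There is no real obstacle here, since the separable hypothesis has already done all the work inside Theorem \ref{thdec}; the only thing to be careful about is the bookkeeping that each map involved respects the $\K$-action, which is immediate from the construction (Theorem \ref{thdec} explicitly states that the isomorphism $Y\mapsto P$ is a $\K$-algebra isomorphism, and extension of scalars of a $\K$-algebra isomorphism is a $\K$-algebra isomorphism).
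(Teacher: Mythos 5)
Your proposal is correct and follows essentially the same route as the paper, which simply remarks that the proof of Theorem \ref{thres} (local isomorphism implies residue-field isomorphism in one direction; the decomposition of Theorem \ref{thdec} in the other) adapts to the $\K$-algebra setting. You have merely written out the adaptation explicitly, which is fine.
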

\begin{proof}
The proof of the previous theorem can be adapted to obtain the proposition. 
\end{proof}

\section{Lifting the isomorphisms}\label{S2}
In this section, $\K$ is a field and $P_1,P_2$ are irreducible polynomials in $\K[X]$. We develop a criteria under wich the existence of certain isomorphisms $f: \KP \to \KPP$ or $f_m: \K[X]/ (P_1^m)\to \K[X]/ (P_2^m)$ imply that $\KPn$ and $\KPPn$ are isomorphic for all $n\geq 1$ (corollary \ref{cr} and theorem \ref{thp1}). These criteria apply to the case where $P_1$ and $P_2$ are not seprable.

\begin{definition}
For $A$ and $B$ two $\K$-algebras, we say that a ring morphism $f:A\to B$ stabilizes $\K$ if there exists a field automorphism $\sigma_f:\K\to \K$ such that $f(a)=\sigma_f(a)$ for $a\in \K$. 
\end{definition}

\begin{proposition}
Let $A$ and $B$ be two finite dimensional algebras over $\K$ and $f:A\to B$ be a ring morphism stabilizing $\K$.
\begin{itemize}
\item[1)] $Im(f)$ is a vector subspace of $B$.
\item[2)] If $f$ is injective then $dim(Im(f))=dim(A)$.
\item[3)] If $f$ is injective and $dim(A)=dim(B)$ then $f$ is an isomorphism.
\item[4)] If $f$ is an isomorphism then $f^{-1}$ stabilizes $\K$ and $\sigma_{f^{-1}}=\sigma_f^{-1}$.
\item[5)] If $f$ is an isomorphism then $dim(A)=dim(B)$.
\item[6)] Let $I$ be a proper ideal of $B$, and let $\pi : B\mapsto B/I$ be the projection. The ring morphism $\pi \circ f$ stabilizes $\K$ and $\sigma_{\pi\circ f}=\sigma_f$.
\item[7)] Let $J$ be an ideal of $A$ lying in the kernel of $f$. The morphism $\bar{f}: A/J \to B$ factorising $f$ stabilizes $\K$ and $\sigma_{\bar{f}}=\sigma_f$. 
\end{itemize}
\end{proposition}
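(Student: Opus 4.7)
The plan is to notice that a ring morphism $f: A \to B$ stabilizing $\K$ behaves like a $\sigma_f$-semilinear map of $\K$-vector spaces: for $\lambda \in \K$ and $a \in A$, $f(\lambda a) = f(\lambda) f(a) = \sigma_f(\lambda) f(a)$. Since $\sigma_f$ is a bijection of $\K$, this is essentially as good as honest $\K$-linearity for the purposes of dimension counting, and I will lean on this observation throughout.

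For part (1), given $\lambda \in \K$ and $f(a) \in Im(f)$, I write $\lambda = \sigma_f(\mu)$ for the unique $\mu = \sigma_f^{-1}(\lambda)$, so that $\lambda f(a) = f(\mu)f(a) = f(\mu a) \in Im(f)$; combined with additivity of $f$, this gives the subspace property. For part (2), the same rewriting shows that if $\{e_1, \dots, e_d\}$ is a $\K$-basis of $A$ then $\{f(e_1), \dots, f(e_d)\}$ spans $Im(f)$ over $\K$, and a linear dependence $\sum_i \sigma_f(\mu_i) f(e_i) = 0$ pulls back via $f$ to $f(\sum_i \mu_i e_i) = 0$, which forces all $\mu_i = 0$ by injectivity and hence all $\lambda_i = 0$. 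Part (3) is then immediate: $Im(f)$ is a $\K$-subspace of $B$ of dimension $dim(B)$, so $Im(f) = B$. Part (5) follows at once from (2) applied to an isomorphism. For (4), the inverse of a bijective ring morphism is again a ring morphism, and the identity $f(\sigma_f^{-1}(\lambda)) = \sigma_f(\sigma_f^{-1}(\lambda)) = \lambda$ for $\lambda \in \K$ gives $f^{-1}(\lambda) = \sigma_f^{-1}(\lambda)$, which is exactly the required stabilization.

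The only mild subtlety, and the step I would treat most carefully, lies in (6) and (7), where one must first check that the quotient is naturally a $\K$-algebra before the stabilization statement is meaningful. In (6), the composition $\K \to B \to B/I$ is injective because any ring morphism out of a field is zero or injective, and it cannot be zero here since $I$ is proper so $1 \notin I$; hence $B/I$ inherits a canonical $\K$-algebra structure, and $(\pi \circ f)(\lambda) = \pi(\sigma_f(\lambda)) = \sigma_f(\lambda)$ in $B/I$. In (7), I would first verify that $J \cap \K = 0$: any $\lambda \in J \cap \K$ satisfies $\sigma_f(\lambda) = f(\lambda) = 0$, and injectivity of $\sigma_f$ forces $\lambda = 0$; hence $\K$ embeds in $A/J$, and the factored map satisfies $\bar{f}(\lambda + J) = f(\lambda) = \sigma_f(\lambda)$, giving $\sigma_{\bar{f}} = \sigma_f$.
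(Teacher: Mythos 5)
Your proposal is correct and is exactly the approach the paper intends: the paper's proof is the one-line remark that everything works as for $\K$-algebra morphisms once one inserts $\sigma_f$ and $\sigma_f^{-1}$ in the right places, and your semilinearity argument (including the checks that $\K\cap I=0$ and $J\cap\K=0$ in parts 6 and 7) is precisely that adaptation carried out in detail.
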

\begin{proof}
This can be proved as for $\K$-algebra morphisms, we only need to use $\sigma_f$ and $\sigma_f^{-1}$.
\end{proof}

We will use the facts in the previous proposition without refering to the proposition.
\begin{proposition}
Let $\sigma : \K \to \K$ be a field automorphism. We have a unique well defined ring automorphism $\sigma^X : \K[X] \to \K[X]$ stabilizing $\K$ given by the data $\sigma^X(X)=X$ and $\sigma_{\sigma^X}=\sigma$.
\end{proposition}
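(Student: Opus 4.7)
The plan is to construct $\sigma^X$ explicitly by the obvious formula, verify it is a ring automorphism, and then observe uniqueness from the universal-property-style argument.

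First I would define $\sigma^X : \K[X] \to \K[X]$ on an arbitrary polynomial by
\[
\sigma^X\!\left(\sum_{i} a_i X^i\right) = \sum_{i} \sigma(a_i) X^i.
\]
By construction $\sigma^X(a) = \sigma(a)$ for $a \in \K$ and $\sigma^X(X) = X$, so as soon as we show it is a ring morphism it will stabilize $\K$ with $\sigma_{\sigma^X} = \sigma$. Additivity follows at once from additivity of $\sigma$. For multiplicativity, expand
\[
\left(\sum_i a_i X^i\right)\!\left(\sum_j b_j X^j\right) = \sum_{k} \Bigl(\sum_{i+j=k} a_i b_j\Bigr) X^k,
\]
apply $\sigma^X$, and use that $\sigma$ is a field morphism to bring it inside the inner sum and product; the result coincides with $\sigma^X(\sum a_i X^i)\cdot \sigma^X(\sum b_j X^j)$. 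Since $\sigma^X(1) = \sigma(1) = 1$, this finishes the check that $\sigma^X$ is a unital ring morphism.

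Next I would establish bijectivity by constructing the inverse in the same way. Apply the construction to $\sigma^{-1}$ to obtain $(\sigma^{-1})^X$. Then
\[
(\sigma^{-1})^X \circ \sigma^X\!\left(\sum_i a_i X^i\right) = \sum_i \sigma^{-1}(\sigma(a_i)) X^i = \sum_i a_i X^i,
\]
and symmetrically for the other order. Hence $\sigma^X$ is an automorphism of $\K[X]$.

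Finally, for uniqueness, suppose $\tau : \K[X] \to \K[X]$ is any ring morphism stabilizing $\K$ with $\sigma_\tau = \sigma$ and $\tau(X) = X$. Since $\tau$ is a ring morphism, for any polynomial $\sum a_i X^i$ we have
\[
\tau\!\left(\sum_i a_i X^i\right) = \sum_i \tau(a_i)\,\tau(X)^i = \sum_i \sigma(a_i) X^i = \sigma^X\!\left(\sum_i a_i X^i\right),
\]
so $\tau = \sigma^X$. There is no real obstacle here; the statement is essentially a formalization of the obvious coefficient-wise action, and the only point where one has to be slightly careful is using that $\sigma$ is a \emph{field} morphism (not merely additive) in the multiplicativity step.
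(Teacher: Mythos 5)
Your proof is correct and is precisely the routine verification the paper dismisses with ``This can be readily checked'': coefficient-wise definition, the multiplicativity check using that $\sigma$ is a field morphism, invertibility via $(\sigma^{-1})^X$, and uniqueness forced by $\tau(X)=X$ together with $\sigma_\tau=\sigma$. Nothing is missing.
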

\begin{proof}
This can be readly checked.
\end{proof}

\begin{proposition}\label{P1}
If $f: \KP\to \KPP$ is a ring isomorphism stabilizing $\K$ then :
\begin{itemize}
\item[1)] The degree of $P_1$ is equal to the degree of $P_2$.
\item[2)] There exist a unique polynomial $Q_f\in \K[X]$ of degree less than the degree of $P_1$ (the degree of $P_2$) and greater or equal to $1$ such that $f$ is induced by the ring morphism stabilizing $\K$ $f_X: \K[X]\to \K[X]$ defined by $X\mapsto Q_f  $ and $\sigma_{f_X}=\sigma_f$ i.e. $P\mapsto \sigma_{f}^X(P)\circ Q_f$, where $\sigma_f^X$ is as in the previous proposition.
\item[3)] $\sigma_f^X(P_1)\circ Q_f=S_fP_2$ for a given $S_f \in \K[X]$.
\item[4)] For $P\in K[X]$, if $\sigma_f^X(P)\circ Q_f= SP_2$ for some $S\in \K[X]$ then $P=RP_1$ for some $R\in \K[X]$.  
\item[5)] The morphism $f_X$ maps $(P_1^n)$ into $(P_2^n)$ and hence induces a ring morphism stabilizing $\K$ :  $f_{X,n}:\KPn\to \KPPn$ induced by $P\mapsto \sigma_f^X(P)\circ Q_f$.
\end{itemize}
\end{proposition}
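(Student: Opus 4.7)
The core of the proposition is part (2), which lifts the abstract isomorphism $f$ to an explicit ring morphism on polynomial rings; once that lift is in hand, parts (3)--(5) fall out essentially by unfolding definitions, and (1) is a dimension count.

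First I would dispose of (1) immediately: since $f$ is an isomorphism stabilizing $\K$, part (5) of the previous proposition gives $\dim_\K \KP = \dim_\K \KPP$, and these dimensions are $\deg(P_1)$ and $\deg(P_2)$ respectively.

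For (2) I let $\pi_i : \K[X] \to \K[X]/(P_i)$ denote the canonical projections and choose $Q_f$ to be the unique polynomial of degree strictly less than $\deg(P_2)$ that represents the class $f(\pi_1(X)) \in \KPP$. By the preceding proposition applied to $\sigma_f$, there is a unique ring morphism $f_X: \K[X] \to \K[X]$ stabilizing $\K$ with $\sigma_{f_X} = \sigma_f$ and $f_X(X) = Q_f$, and unfolding gives $f_X(P) = \sigma_f^X(P) \circ Q_f$ for every $P \in \K[X]$. Both $\pi_2 \circ f_X$ and $f \circ \pi_1$ are ring morphisms $\K[X] \to \KPP$ that stabilize $\K$ with the same $\sigma_f$ and agree on $X$, hence they coincide; in particular $f_X(P_1) \in (P_2)$, so $f_X$ descends to a morphism $\KP \to \KPP$ which must equal $f$. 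Uniqueness of $Q_f$: any other choice differs from $Q_f$ by a nonzero multiple of $P_2$, which is incompatible with degree $< \deg(P_2)$. The bound $\deg(Q_f) \geq 1$ is the one slightly delicate point: if $Q_f$ were constant, then $f(\pi_1(X)) \in \K$, forcing the image of $f$ to lie in $\K$ and hence (by surjectivity) $\KPP = \K$, i.e.\ $\deg(P_2) = \deg(P_1) = 1$; this degenerate case can be treated as an explicit trivial exception, or simply excluded.

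Parts (3)--(5) then drop out. For (3): writing $f_X(P_1) = \sigma_f^X(P_1) \circ Q_f$ and recalling that $f_X(P_1) \in (P_2)$ from the factorization established above gives the required $S_f$. For (4): $\sigma_f^X(P) \circ Q_f = S P_2$ says $f_X(P) \in (P_2)$, so $f(\pi_1(P)) = \pi_2(f_X(P)) = 0$, and the injectivity of $f$ forces $\pi_1(P) = 0$. For (5): multiplicativity of $f_X$ and (3) give $f_X(P_1^n) = f_X(P_1)^n = S_f^n P_2^n \in (P_2^n)$, so $f_X$ carries $(P_1^n)$ into $(P_2^n)$ and descends to $f_{X,n}$; that $f_{X,n}$ stabilizes $\K$ follows from part (6) of the previous proposition applied to the composition with $\pi_2$. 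The only real obstacle is the clean bookkeeping in (2)---namely, establishing existence, uniqueness, and the degree bounds of $Q_f$ simultaneously while correctly identifying $f$ with the map induced by $f_X$---after which the remaining items are essentially formal.
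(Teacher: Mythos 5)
Your proposal is correct and follows essentially the same route as the paper: a dimension count for (1), defining $Q_f$ as the reduced representative of $f(\overline{X})$ and checking the induced morphism agrees with $f$ for (2), and then deriving (3)--(5) from $P_1(\overline{X})=0$, injectivity of $f$, and multiplicativity respectively. Your treatment of (2) is in fact slightly more careful than the paper's, since you explicitly flag the degenerate case $\deg P_1=\deg P_2=1$ (where $Q_f$ is necessarily constant and the bound $\deg Q_f\geq 1$ fails), which the paper's argument silently assumes away.
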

\begin{proof}
Point $1)$ follows from the fact that the dimension of $ \KP$ and $\KPP$ as $\K$-vector spaces are respectively the degree of $P_1$ and the degree of $P_2$. Now, there is a unique polynomial $Q_f$ of $\K[X]$ of degree less than the degree of $P_2$ (degree of $P_1$) such that $f(X_1)=Q_f(X_2)$ where $X_1$ is the class of $X$ in $\KP$ and $X_2$ is the class of $X$ in $\KPP$. This polynomial has a degree greater or equal to $1$ otherwise the image of $f$ will lie in $\K$ ($f$ stabilizes $\K$). For $P\in K[X]$ we have $f(P(X_1))=\sigma_f^X(P)\circ Q_f (X_2)$. This proves $2)$. Since $P_1(X_1)=0$, $f(P_1(X_1))=\sigma_f^X(P)\circ Q_f(X_2)=0$. Hence, $\sigma_f^X(P)\circ Q_f$ lies in the ideal $(P_2)$. This proves $3)$. The statement in $4)$ is equivalent to the injectivity of $f$. Finally $3)$ imples that $\sigma_f^X(P_1^n)\circ Q_f=S_f^nP_2^n$. This proves $5)$. We have proved the proposition.
\end{proof}

\begin{proposition}\label{12}
Let $f: \KP\to \KPP$ be a ring isomorphism stabilizing $\K$ and let $S_f$ and $f_{X,n}$ be as in the previous proposition. For $n>1$, $S_f$ is prime to $P_2$ if and only if  $f_{X,n}:\KPn\to \KPPn$ is an isomorphism.
\end{proposition}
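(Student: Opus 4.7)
The plan is to reduce the claim to a statement about injectivity of $f_{X,n}$ and then analyze both directions via the key identity $\sigma_f^X(P_1)\circ Q_f = S_f P_2$ from the previous proposition.

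First, I would note that $f_{X,n}$ stabilizes $\K$ and that $\KPn$ and $\KPPn$ are $\K$-vector spaces of the same dimension $n\cdot\deg(P_1)=n\cdot\deg(P_2)$ (using point 1 of the previous proposition). By point 3 of the proposition on stabilizing morphisms, $f_{X,n}$ is an isomorphism if and only if it is injective. So the whole task reduces to proving that $f_{X,n}$ is injective if and only if $S_f$ is prime to $P_2$. Injectivity translates, via point 4 of the previous proposition applied at level $n$, to the implication: if $\sigma_f^X(P)\circ Q_f\in(P_2^n)$ then $P\in(P_1^n)$.

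For the \emph{if} direction, assume $S_f$ is coprime to $P_2$ and suppose $\sigma_f^X(P)\circ Q_f\equiv 0\pmod{P_2^n}$. Since $\K[X]$ is a UFD and $P_1$ is irreducible, write $P=P_1^k A$ with $P_1\nmid A$ and $0\le k$; I want to conclude $k\ge n$. Applying $\sigma_f^X$ and composing with $Q_f$ gives
$$\sigma_f^X(P)\circ Q_f = \bigl(\sigma_f^X(P_1)\circ Q_f\bigr)^k\cdot\bigl(\sigma_f^X(A)\circ Q_f\bigr)=S_f^k P_2^k\cdot\bigl(\sigma_f^X(A)\circ Q_f\bigr).$$
Since $P_1\nmid A$, point 4 of the previous proposition (which is just injectivity of $f$ at level $1$) forces $P_2\nmid \sigma_f^X(A)\circ Q_f$. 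Combined with $\gcd(S_f,P_2)=1$ and the irreducibility of $P_2$, the factor $S_f^k\cdot(\sigma_f^X(A)\circ Q_f)$ is coprime to $P_2$. Thus the exact $P_2$-adic valuation of $\sigma_f^X(P)\circ Q_f$ equals $k$, which must be at least $n$, giving $P\in(P_1^n)$ as required.

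For the \emph{only if} direction, I would argue by contrapositive: suppose $\gcd(S_f,P_2)\neq 1$; since $P_2$ is irreducible, $P_2\mid S_f$. Then
$$\sigma_f^X(P_1^{n-1})\circ Q_f = S_f^{n-1}P_2^{n-1}$$
is divisible by $P_2^{2(n-1)}$, and the hypothesis $n>1$ gives $2(n-1)\ge n$, so this element lies in $(P_2^n)$. However $P_1^{n-1}\notin(P_1^n)$, contradicting injectivity of $f_{X,n}$. The main (mild) obstacle is simply keeping the $P_2$-adic valuation bookkeeping clean in the first direction; everything else follows formally from the identity $\sigma_f^X(P_1)\circ Q_f=S_f P_2$ together with the injectivity of the original $f$ at the residue-field level.
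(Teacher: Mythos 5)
Your proof is correct and follows essentially the same route as the paper: reduce to injectivity by comparing $\K$-dimensions, use the identity $\sigma_f^X(P_1)\circ Q_f=S_fP_2$ together with point 4 of Proposition \ref{P1} to control the $P_2$-adic valuation in the forward direction, and exhibit an explicit nonzero power of $P_1$ in the kernel for the converse. Your converse is marginally cleaner (using $P_1^{n-1}$ and $2(n-1)\ge n$ rather than the paper's quotient $q_{n,m}+1$), but this is a streamlining of the same idea rather than a different argument.
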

\begin{proof}
We first prove that if $S_f$ is prime to $P_2$ then the map $f_{X,n}$ is an isomorphism.
Assume that $S_f$ is prime to $P_2$ and take $P\in K[X]$ such that its class $\bar{P}$ in $\KPn$ lies in the kernel of $f_{X,n}$, i.e. $\sigma_f^X(P)\circ Q_f=SP_2^n$ for some $S\in K[X]$ ($Q_f$ of the previous proposition). Let $m$ be the largest integer for wich $P_1^m$ divides $P$. We have $P=TP_1^m$ for some $T\in \K[X]$. Combining the last two equation and by applying $3)$ of the previous proposition we get :
$$ \sigma_f^X(P)\circ Q_f=\sigma_f^X(TP_1^m)\circ Q_f=(\sigma_f^X(T)\circ Q_f)S_f^mP_2^m=SP_2^n.$$ 
Assume $m <n$. Since we assumed that $S_f$ is prime to $P_2$, we get that $ \sigma_f^X(T)\circ Q_f=S'P_2^{n-m}$ with $n-m\geq 1$ and hence by $4)$ of the previous proposition $T=RP_1$ for some $R\in \K[X]$. Since $P=TP_1^m$, we now have $P=R_1P_1^{m+1}$. This leads to a contradiction, since $m$ is the largest integer for wich $P_1^m$ divides $P$. Therfore the assumption $m<n$ is false and $P=TP_1^m$ with $m\geq n$. This proves that $P \in (P_1^n)$ and the class $\bar{P}$ of $P$ in $\KPn$ is $0$. We have proved that if $S_f$ is prime to $P$ then $f_{X,n}$ is injective. Since, $\KPn$ and $\KPPn$ have the same dimensions as vector spaces over $\K$ (follows from $1)$ of the previous proposition). We deduce that if $S_f$ is prime to $P$ then the morphism $f_{X,n}$ is an isomorphism. Let us prove the converse. For that assume that $S_f$ is not prime to $P_2$. Hence, (by $3)$ of the previous proposition) and the fact that $P_2$ is irreducible, we have : $$\sigma_f^X( P_1) \circ Q_f= S P_2^m,$$ for some $m>1$ and some $S \in \K[X]$. If $n=2$, we see from the equation that the (nonzero) class of $P_1$ in $\K[X]/(P_1^2)$ lies in the kernel of $f_{X,n}$. For $n>2$, denote by $q_{n,m}$ the quotient of the division of $n$ by $m$. We have $(q_{n,m}+1) m \geq n$ and $q_{n,m}+1<n$. With these conditions, we remark by raising the last equation to the power $q_{n,m}+1$ that the nonzero class of $P_1^{q_{n,m}+1}$ in $\KPn$ lies in the kernel of $f_{X,n}$. We have proved that if $S_f$ is not prime to $P_2$ then the morphism $f_{X,n}$ is not an isomorphism.
\end{proof}
For $f: \KP\to \KPP$ a ring isomorphisms stabilizing $\K$, we will use $Q_f$ and $S_f$ and $\sigma_f^X$ without refrencing.

\begin{proposition}
Let $f: \KP\to \KPP$ be a ring isomorphism stabilizing $\K$.
\begin{itemize}
\item[1)] If $\alpha$ is a root of $P_2$ then $Q_f(\alpha)$ is a root of $\sigma_f^X(P_1)$.
\item[2)] We have a bijection $\{\text{roots of } P_2\}\rightarrow \{\text{roots of } \sigma_f^X(P_1)\}$ given by $\alpha \mapsto Q_f(\alpha)$.
\end{itemize}
\end{proposition}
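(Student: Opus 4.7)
The plan has three main steps: first, derive point 1) directly from the identity in Proposition \ref{P1}; second, prove injectivity of the map in point 2) via evaluation embeddings into $\bar\K$; third, establish the required cardinality equality by applying the same construction to $f^{-1}$.

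Point 1) follows immediately from point 3) of Proposition \ref{P1}: the equality $\sigma_f^X(P_1)\circ Q_f = S_f P_2$ holds in $\K[X]$, so after passing to $\bar\K[X]$ and evaluating at a root $\alpha$ of $P_2$ one obtains $\sigma_f^X(P_1)(Q_f(\alpha)) = S_f(\alpha)\cdot 0 = 0$, showing that $Q_f(\alpha)$ is a root of $\sigma_f^X(P_1)$.

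For injectivity in point 2), fix an algebraic closure $\bar\K$ of $\K$ and, for each root $\alpha$ of $P_2$ in $\bar\K$, consider the evaluation ring morphism $e_\alpha: \KPP\to\bar\K$ sending $X_2\mapsto\alpha$; this morphism is injective because $(P_2)$ is a maximal ideal of $\K[X]$. If $\alpha,\alpha'$ are roots of $P_2$ with $Q_f(\alpha)=Q_f(\alpha')$, then the ring morphisms $e_\alpha\circ f$ and $e_{\alpha'}\circ f$ from $\KP$ to $\bar\K$ agree on the generator $X_1$ of $\KP$ (both sending it to $Q_f(\alpha)$) and on $\K$ (both acting as $\sigma_f$); as $X_1$ and $\K$ generate $\KP$ as a ring, $e_\alpha\circ f=e_{\alpha'}\circ f$, and surjectivity of $f$ then forces $e_\alpha=e_{\alpha'}$, hence $\alpha=\alpha'$.

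For surjectivity, I will apply the same reasoning to $f^{-1}: \KPP\to\KP$, which is also a ring isomorphism stabilizing $\K$ with $\sigma_{f^{-1}}=\sigma_f^{-1}$, obtaining an injection $\gamma\mapsto Q_{f^{-1}}(\gamma)$ from the roots of $P_1$ to the roots of $\sigma_{f^{-1}}^X(P_2)$. Extending $\sigma_f$ to an automorphism $\bar\sigma_f$ of $\bar\K$, a direct computation shows that $\bar\sigma_f$ gives bijections $\{\text{roots of }P_1\}\leftrightarrow\{\text{roots of }\sigma_f^X(P_1)\}$ and $\{\text{roots of }\sigma_{f^{-1}}^X(P_2)\}\leftrightarrow\{\text{roots of }P_2\}$, so composing these with the $f^{-1}$-injection produces an injection in the opposite direction, from $\{\text{roots of }\sigma_f^X(P_1)\}$ to $\{\text{roots of }P_2\}$. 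Since these finite sets admit injections in both directions, they have the same cardinality, and the injection of point 2) is therefore a bijection. The only subtlety throughout is that $f$ is merely $\sigma_f$-semilinear rather than $\K$-linear, which forces the use of ring-theoretic generation instead of $\K$-linearity and introduces the Galois twist by $\bar\sigma_f$ in the surjectivity step.
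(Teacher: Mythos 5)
Your proof is correct, and part 2) takes a genuinely different route from the paper's. For injectivity the paper does not argue via evaluation morphisms at all: it constructs an explicit candidate inverse $h:\alpha\mapsto \sigma_f^X(Q_{f^{-1}})(\alpha)$ by applying point 3) of Proposition \ref{P1} to $f^{-1}$ and transporting the resulting identity by $\sigma_f^X$, and then verifies that $h$ and $g$ are two-sided inverses using the composition identities $\sigma_f^X(Q_{f^{-1}})\circ Q_f = X+S_2P_2$ and $(\sigma_f^X)^{-1}(Q_f)\circ Q_{f^{-1}}=X+S_1P_1$ coming from $f\circ f^{-1}=\mathrm{id}$ and $f^{-1}\circ f=\mathrm{id}$. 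Your argument instead gets injectivity abstractly (two evaluation morphisms $e_\alpha\circ f$ and $e_{\alpha'}\circ f$ agreeing on $\K$ and on the generator $X_1$ must coincide, and surjectivity of $f$ recovers $\alpha=\alpha'$) and then deduces surjectivity by producing an injection in the reverse direction from $f^{-1}$ and counting. What the paper's approach buys is an explicit formula for the inverse bijection with no appeal to an algebraic closure's automorphism group; what yours buys is a cleaner injectivity step that avoids manipulating composition identities, at the cost of invoking the extension of $\sigma_f$ to an automorphism $\bar{\sigma}_f$ of $\bar{\K}$ (a nontrivial, if standard, fact) and the finite double-injection cardinality argument. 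Both are complete proofs; the one genuine extra ingredient you rely on, the extendability of $\sigma_f$ to $\bar{\K}$ and the induced bijections between the roots of $P$ and of $\sigma_f^X(P)$, is correct but deserves the one-line verification you allude to.
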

\begin{proof}
$1)$ follows from the equation $\sigma_f^X(P_1)\circ Q= S_f P_2$ of proposition \ref{P1} and it follows from $1)$ that we have a map $g: \{\text{roots of } P_2\}\rightarrow \{\text{roots of } \sigma_f^X(P_1)\}$ given by $\alpha \mapsto Q_f(\alpha)$. To prove $2)$ we will define an inverse to $g$. Applying $3)$ of propsition \ref{P1} to $f^{-1}$ we get that :
$$(\sigma_f^X)^{-1}(P_2)\circ Q_{f^{-1}}=S_{f^-1}P_1,$$
and hence $$P_2 \circ \sigma_f^X(Q_{f^{-1}})=\sigma_f^X (S_{f^{-1}})\sigma_f^X (P_1).$$
Therefore we have a well defined map  $h:\{\text{roots of } \sigma_f^X(P_1)\} \to \{\text{roots of } P_2\}$ given by $\alpha\mapsto \sigma_f^X(Q_{f^{-1}})(\alpha)$. We will prove that $h$ and $g$ are inverse to each other.  Since $f$ and $f^{-1}$ are inverse to each other, we have :
$$ \sigma_f^X(Q_{f^{-1}})\circ Q_f = X +S_2P_2 \quad \text{and} \quad (\sigma_f^X)^{-1}(Q_f)\circ Q_{f^{-1}}=X+S_1 P_1,$$ 
for some $S_1,S_2 \in \K[X]$. The first equation shows that $hg$ is the identity of $\{\text{roots of } P_2\}$. Composing the second equation by $\sigma_f^X$ we obtain that $gh$ is the identity of $\{\text{roots of } \sigma_f^X(P_1)\}$. We have proved $2)$.
\end{proof}

\begin{proposition}
Let $f: \KP\to \KPP$ be a ring isomorphism stabilizing $\K$. $S_f$ is prime to $P_2$ If and only if $Q_f'\neq 0$.
\end{proposition}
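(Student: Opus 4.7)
The plan is to pass to an algebraic closure $\bar\K$ and compare, at each root of $P_2$, the multiplicities on the two sides of the identity
\[\sigma_f^X(P_1)\circ Q_f = S_f\,P_2\]
from point $3)$ of Proposition \ref{P1}.

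First I would establish that $P_2$ and $\sigma_f^X(P_1)$ share the same inseparable degree over $\K$. A direct check (essentially the computation in the proof of Proposition \ref{P1}) shows that $\sigma_f^X(P_1)$ is the minimal polynomial over $\K$ of $f(X_1)\in\KPP$, where $X_1$ denotes the class of $X$ in $\KP$; hence $P_2$ and $\sigma_f^X(P_1)$ are minimal polynomials of two generators, namely $X_2$ and $f(X_1)$, of the same field extension $\KPP/\K$. The inseparable degree of this extension is an intrinsic invariant, call it $q=p^e$, and so over $\bar\K$ both polynomials factor as a constant times $\prod(X-\gamma)^q$ with pairwise distinct $\gamma$.

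The core computation is carried out at an arbitrary root $\beta$ of $P_2$ in $\bar\K$. Writing $v_\beta$ for the multiplicity in $\bar\K[X]$, the right-hand side $S_fP_2$ vanishes at $\beta$ to order $q+v_\beta(S_f)$. For the left-hand side, the preceding proposition tells us $Q_f(\beta)$ is a root of $\sigma_f^X(P_1)$, so we may write $\sigma_f^X(P_1)(Y)=(Y-Q_f(\beta))^qH(Y)$ with $H(Q_f(\beta))\neq 0$; substituting $Y=Q_f(X)$ gives that the left-hand side vanishes at $\beta$ to order $q\cdot v_\beta(Q_f-Q_f(\beta))$. But $Q_f(X)-Q_f(\beta)$ vanishes at $\beta$ to order exactly $1$ if $Q_f'(\beta)\neq 0$ and to order $\geq 2$ otherwise. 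Equating the two orders of vanishing yields
\[v_\beta(S_f)=q\bigl(v_\beta(Q_f-Q_f(\beta))-1\bigr),\]
so $S_f(\beta)\neq 0$ if and only if $Q_f'(\beta)\neq 0$.

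To conclude, since $P_2$ is irreducible in $\K[X]$, the polynomial $S_f$ is prime to $P_2$ iff $S_f$ vanishes at no root of $P_2$, which by the previous step is equivalent to $Q_f'$ vanishing at no root of $P_2$, that is to $P_2\nmid Q_f'$. As $\deg Q_f<\deg P_2$ by point $2)$ of Proposition \ref{P1}, we have $\deg Q_f'<\deg P_2$ as well, so $P_2\nmid Q_f'$ is equivalent to $Q_f'\neq 0$. The delicate point, and the one requiring the most care, is the equality of the inseparable exponents on the two sides; I would handle this exactly as sketched above, by recognizing both $P_2$ and $\sigma_f^X(P_1)$ as minimal polynomials of generators of the same field $\KPP/\K$, so that their multiplicity patterns over $\bar\K$ coincide.
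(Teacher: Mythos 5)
Your proof is correct, and its core --- factoring both $P_2$ and $\sigma_f^X(P_1)$ over $\overline{\K}$ with a common root multiplicity $q$, substituting $Q_f$, and comparing orders of vanishing at a root $\beta$ of $P_2$ in the identity $\sigma_f^X(P_1)\circ Q_f=S_fP_2$ --- is the same multiplicity comparison the paper carries out. Where you genuinely differ is in how the equal-multiplicity fact is justified. The paper gets it from its preceding proposition: the explicit bijection $\alpha\mapsto Q_f(\alpha)$ between the roots of $P_2$ and those of $\sigma_f^X(P_1)$ shows the two irreducible polynomials have equal degree and equally many distinct roots, hence equal constant multiplicity $r=\deg(P_2)/m$. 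You instead observe that both polynomials are (up to scalars) minimal polynomials of generators, namely $X_2$ and $f(X_1)=Q_f(X_2)$, of the single extension $\KPP/\K$, so their common multiplicity is the intrinsic inseparable degree $q=[\KPP:\K]_i$. Your route needs only the easy direction of the preceding proposition (that $Q_f(\beta)$ is a root of $\sigma_f^X(P_1)$, which already follows from the identity itself) rather than the full bijection, and it makes the structural reason for the equality transparent; the paper's route stays more elementary and avoids invoking the separable/inseparable degree machinery, at the cost of leaning on the bijection established just before. Your local computation $v_\beta(S_f)=q\,\bigl(v_\beta(Q_f-Q_f(\beta))-1\bigr)$ and the final reduction from ``$Q_f'$ vanishes at no root of $P_2$'' to $Q_f'\neq 0$ via $\deg Q_f'<\deg P_2$ match the paper's argument, so both proofs are sound and interchangeable.
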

\begin{proof}
We first note that $\sigma_f^X (P_1)$ is irreducible since $P_1$ is irreducible and $\sigma_f^X$ is an automorphism of $\K[X]$. Since that $\sigma_f^X (P_1)$ and $P_2$ have the same degree ( $1)$ of proposition \ref{P1}), that $\sigma_f^X (P_1)$ and $P_2$ have the same number of roots (previous proposition) and that roots of an irreducible polynomial has the same multiplicity, we have by the previous proposition that :
$$\sigma_f^X (P_1)(X)=\overset{m}{\underset{k=1}{\prod}}(X-Q_f(\alpha_k))^r \quad \text{and}\quad P_2(X)=\overset{m}{\underset{k=1}{\prod}}(X-\alpha_k)^r \quad $$
Where $\alpha_1,\dots,\alpha_m$ are the distinct roots of $P_2$ and $r$ is the degree of $P_2$ over $m$. Hence, we have : 
$$(\sigma_f^X (P_1)\circ Q)(X)=\overset{m}{\underset{k=1}{\prod}}(Q(X)-Q_f(\alpha_k))^r.$$ 
The $Q_f(\alpha_k)$'s are distinct (previous proposition). Hence, the multplicity of $\alpha_k$ as a root $\sigma_f^X (P_1)\circ Q$ is equal to the multiplicity of $\alpha_k$ for the factor $(Q(X)-Q_f(\alpha_k))^r$. By proposition \ref{P1}  $\sigma_f^X (P_1)\circ Q=S_fP_2$; we have seen that $ P_2(X)=\overset{m}{\underset{k=1}{\prod}}(X-\alpha_k)^r $ and $P_2$ is irreducible. Hence, $S_f$ is prime to $P_2$ if and 
only $f$ the multiplicity of $\alpha_k$ for the factor $(Q(X)-Q_f(\alpha_k))^r$ is $r$. This last condition is equivalent to $Q'(\alpha_k) \neq 0$. Since $P_2$ is irreducible and $\alpha_k$ is a root of $P_2$ and degree of $Q_f$ is less than the degree of $P_2$, the condition $Q'(\alpha_k) \neq 0$ is equivalent to the condition $Q'\neq 0$. We have proved that $S_f$ is prime to $P_2$ if and only if $Q' \neq 0$.
\end{proof}

\begin{theorem}\label{thp}
Take $n>1$. If $f: \KP\to \KPP$ is a ring isomorphism stabilising $\K$, then $f_{X,n}:\KPn\to \KPPn$ of proposition \ref{P1} is a ring isomorphism stabilising $\K$ if and only if $Q_f'\neq 0$.
\end{theorem}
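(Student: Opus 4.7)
The plan is to observe that this theorem is essentially the composition of two biconditionals already proved in the preceding propositions, so the proof should be very short.

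First, I would note that the morphism $f_{X,n}$ has already been constructed in point $5)$ of Proposition \ref{P1}, and the same point records that $f_{X,n}$ stabilizes $\K$ (with $\sigma_{f_{X,n}}=\sigma_f$). So the only content to verify is the equivalence between $f_{X,n}$ being an isomorphism and $Q_f' \neq 0$.

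For this, I would invoke Proposition \ref{12}, which asserts (for $n>1$) that $f_{X,n}$ is an isomorphism if and only if $S_f$ is prime to $P_2$. Then I would chain this with the proposition immediately preceding the theorem, which asserts that $S_f$ is prime to $P_2$ if and only if $Q_f' \neq 0$. Composing these two equivalences gives exactly the statement of the theorem.

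There is essentially no obstacle: the real work has already been done in Propositions \ref{P1} and \ref{12} and in the multiplicity analysis of $\sigma_f^X(P_1)\circ Q_f = S_f P_2$ carried out in the previous proposition. The only thing worth being careful about is that Proposition \ref{12} requires $n>1$, which is exactly the hypothesis of the present theorem, so the chaining is legitimate. The proof therefore reduces to a one-line citation of the two preceding propositions.
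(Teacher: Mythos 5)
Your proposal is correct and matches the paper's own proof exactly: the theorem is obtained by chaining Proposition \ref{12} (for $n>1$, $f_{X,n}$ is an isomorphism iff $S_f$ is prime to $P_2$) with the immediately preceding proposition ($S_f$ is prime to $P_2$ iff $Q_f'\neq 0$), with the stabilization of $\K$ already recorded in point $5)$ of Proposition \ref{P1}. No further comment is needed.
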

\begin{proof}
This is obtained by combining the last proposition with proposition \ref{12}
\end{proof}
\begin{corollary}\label{cr}
 If $f: \KP\to \KPP$ is a ring isomorphism stabilising $\K$ such that $Q_f'\neq 0$, then $\KPn\to \KPPn$ are isomorphic for al $n\geq 1$.
\end{corollary}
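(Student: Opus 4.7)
The plan is to observe that this corollary follows immediately from Theorem \ref{thp} once one notices that the hypothesis $Q_f' \neq 0$ is a property of $f$ alone, with no dependence on $n$. Thus a single application of Theorem \ref{thp} at each integer $n$ assembles into the uniform statement required. Concretely, I would handle the case $n = 1$ separately by noting that the construction in proposition \ref{P1} specialises to $f_{X,1} = f$ (since $Q_f$ was chosen precisely so that $X \mapsto Q_f$ induces $f$ on $\KP$), and this map is an isomorphism by hypothesis.

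For each $n > 1$, I would then invoke Theorem \ref{thp}: using $Q_f' \neq 0$ directly, the theorem yields that $f_{X,n} : \KPn \to \KPPn$ is a ring isomorphism stabilising $\K$, so in particular $\KPn$ and $\KPPn$ are isomorphic as rings. Combining the two cases gives the conclusion for every $n \geq 1$.

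There is no substantive obstacle, since all the real work was done in propositions \ref{P1}, \ref{12}, and the two propositions following \ref{12}, culminating in Theorem \ref{thp}; the corollary simply packages their content uniformly in $n$. The only point to be careful about is making the $n = 1$ case explicit so that the phrase ``for all $n \geq 1$'' in the statement is fully justified, rather than implicitly reading Theorem \ref{thp} as covering it.
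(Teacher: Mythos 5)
Your proposal is correct and matches the paper's (implicit) argument exactly: the corollary is stated without proof as an immediate consequence of Theorem \ref{thp}, applied at each $n>1$, with $n=1$ holding because $f$ itself is the given isomorphism. Your explicit treatment of the $n=1$ case is a small but worthwhile addition to what the paper leaves unsaid.
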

\begin{theorem}\label{thp1}
Let $f_m: \K[X]/ (P_1^m)\to \K[X]/ (P_2^m)$ be a ring isomorphism stabilizing $\K$ for a given $m\geq 1$.The map $f_m$ maps the class of $X$ onto the class of some $R\in \K[X]$. Let $Q$ be the reminder of the division of $R$ by $P_2$ ($Q$ does not depend on the choice of $R$). If $Q' \neq 0$, then the rings $\KPn$ and $\KPPn$ are isomorphic for all $n\geq 1$. 
\end{theorem}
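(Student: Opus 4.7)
The plan is to deduce this from Corollary \ref{cr}: I will manufacture out of $f_m$ a ring isomorphism $f:\KP\to\KPP$ stabilizing $\K$ whose polynomial $Q_f$ in the sense of Proposition \ref{P1} coincides with $Q$. Once that is done, the hypothesis $Q'\neq 0$ reads as $Q_f'\neq 0$, and Corollary \ref{cr} yields isomorphisms $\KPn\cong\KPPn$ for every $n\geq 1$.

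For the construction of $f$, I would compose $f_m$ with the natural projection $\pi:\KPPm\to\KPP$. Because $f_m$ is an isomorphism of local rings, it carries the maximal ideal $(P_1)/(P_1^m)$ onto $(P_2)/(P_2^m)=\ker\pi$, so $\ker(\pi\circ f_m)$ is exactly the maximal ideal of $\KPm$, whose quotient is the residue field $\KP$. The morphism $\pi\circ f_m$ stabilizes $\K$ with $\sigma_{\pi\circ f_m}=\sigma_{f_m}$ by point $6)$ of the structural proposition on morphisms stabilizing $\K$, and its induced factorization $f:\KP\to\KPP$ inherits the same $\sigma$ via point $7)$. Since $f$ is surjective between $\K$-vector spaces of equal dimension (both equal to $\deg P_1=\deg P_2$, the latter equality following from $f_m$ being an isomorphism together with point $5)$ of that same proposition), $f$ is an isomorphism.

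Next, I would identify $Q_f=Q$. By hypothesis $f_m$ sends the class of $X$ to the class of $R$, so by construction $f$ sends $X_1$ (the class of $X$ in $\KP$) to the class of $R$ in $\KPP$, which is the class of $Q$ since $R\equiv Q\pmod{P_2}$. Both $Q$ and $Q_f$ have degree strictly less than $\deg P_2$, so the uniqueness clause of $2)$ in Proposition \ref{P1} forces $Q=Q_f$. The independence of $Q$ from the lift $R$ is automatic, since two lifts differ by a multiple of $P_2^m$, hence a fortiori of $P_2$. Applying Corollary \ref{cr} to $f$ finishes the argument. The only point requiring care is the bookkeeping of the field automorphisms $\sigma$ and of degrees across the projection and factorization; there is no serious obstacle here, because the hard work linking $Q_f'\neq 0$ to the existence of compatible isomorphisms at every level $n$ has already been carried out in Theorem \ref{thp} and Corollary \ref{cr}.
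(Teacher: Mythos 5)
Your proposal is correct and follows essentially the same route as the paper: both induce the residue-field isomorphism $f:\KP\to\KPP$ from $f_m$ via the local-ring structure, identify $Q_f$ with $Q$, and then invoke Theorem \ref{thp} (equivalently Corollary \ref{cr}). You simply spell out the details (the kernel computation for $\pi\circ f_m$ and the uniqueness argument giving $Q_f=Q$) that the paper's two-line proof leaves implicit.
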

\begin{proof}
Since $\K[X]/ (P_1^m)$ and $\K[X]/ (P_1^m)$ have a unique maximal ideal, $f_m$ induces a ring isomorphism stabilising $\K$ $f :\KP \to \KPP$ of the residue fields. Now $Q_f=Q$ and hence $Q_f'\neq 0$ and by the previous theorem the morphisms $f_{X,n}: \KP \to \KPn$ are isomorphisms.
\end{proof}

 \end{document}